\newif\ifKfour 
\newtheorem{theorem}{Theorem}[section]
\newtheorem{lemma}[theorem]{Lemma}
\newtheorem{problem}[theorem]{Problem}
\newtheorem{corollary}[theorem]{Corollary}
\newtheorem{proposition}[theorem]{Proposition}
\theoremstyle{remark}
\newtheorem{rem}[theorem]{Remark}
\theoremstyle{definition}
\newcommand\R{\ensuremath{\mathbb{R}}}
\newcommand{\IH}{\it{IH}}
\newcommand{\conv}{\rm{conv}}
\DeclareMathOperator{\sd}{sd}
\DeclareMathOperator{\st}{st}
\DeclareMathOperator{\Pyr}{Pyr}
\title{QGLBT for polytopes}
\author{Karim Adiprasito\thanks{Einstein Institute of Mathematics,
The Hebrew University of Jerusalem, Jerusalem, 91904 Israel.
Partially supported by supported by ERC StG 716424 - CASe and ISF Grant 1050/16.},
Mikhail Burens\thanks{Einstein Institute of Mathematics,
The Hebrew University of Jerusalem, Jerusalem, 91904 Israel.},
and Eran Nevo\thanks{
Einstein Institute of Mathematics,
The Hebrew University of Jerusalem.
Partially supported by Israel Science Foundation grant ISF-1695/15, by grant 2528/16 of the ISF-NRF Singapore joint research program, and by ISF-BSF joint grant 2016288.}
}
\date{\today}
	\newcommand{\AR}{\mathcal{A}}
	\newcommand{\BR}{\mathcal{B}}
\begin{document}
\maketitle
\begin{abstract}
We extend the assertion of the Generalized Lower Bound Theorem (GLBT) to general polytopes under the assumption that their low dimensional skeleton is simplicial, with partial results for the general case.
We prove a quantitative version of the GLBT for general polytopes, and use it to give a topological necessary condition for polytopes to have vanishing toric $g_k$ entry. As another application of the QGLBT we prove a conjecture of Kalai on $g$-numbers for general polytopes approximating a smooth convex body.
\end{abstract}

\section{Introduction}
The well known $g$-theorem \cite{Billera-Lee:g, Stanley:g, McMullen:g-conj} characterizes the face numbers of \emph{simplicial} polytopes, and in particular
says that the $g$-numbers are nonnegative. The Generalized Lower Bound Theorem (GLBT) \cite{McMullenWalkup:GLBC-71, Murai-Nevo:GLBT} characterizes the simplicial polytopes attaining equality $g_k=0$, being exactly the \emph{$(k-1)$-stacked} polytopes.
For general poytopes $P$, the toric $g$-vector, introduced by Stanley \cite{Stanley:IH}
, is computed from the face poset of the polytope, and coincides with the $g$-vector in the simplicial case. Karu \cite{Karu:g} proved nonnegativity of the toric $g$-vector by showing it computes the dimension of the primitive cohomology of the associated Combinatorial Intersection Homology module ${\IH}(P)$ (introduced in \cite{BBFK,B-Lunts}) w.r.t. a Lefschetz element.
The following problem naturally arise:
\begin{problem}
Given $1\le k\le d/2$, for which $d$-polytopes $P$ does toric $g_k(P)=0$?
\end{problem}
We provide a few results in this direction.
A polytope is \emph{$k$-simplicial} if all its $k$-dimensional faces are simplices. The following generalizes the GLBT:
\begin{theorem}\label{thm:toricGLBT}
  Let $P$ be a $(2k-1)$-simplicial $d$-polytope with $g_k(P)=0$ ($k\le d/2$). Let $\Delta$ be the collection of geometric $d$-simplices whose $(k-1)$-skeleton is a subcomplex of the face complex of $P$. Then $\Delta$ is a triangulation of $P$.
\end{theorem}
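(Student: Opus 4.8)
The plan is to deduce Theorem~\ref{thm:toricGLBT} from a quantitative strengthening (the QGLBT advertised in the abstract): an inequality $g_k(P)\ge \varepsilon(\Delta,P)$ in which $\varepsilon(\Delta,P)\ge 0$ measures the failure of $\Delta$ to triangulate $P$ and vanishes exactly when $|\Delta|=P$. So the work divides into (i) realizing $\Delta$ as an embedded simplicial complex inside $P$, and (ii) showing that any region of $P$ missed by $\Delta$ contributes to the primitive part of $\IH(P)$, hence to $g_k(P)$.

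I would begin with the combinatorics of $\Delta$. Since $k-1\le 2k-1$ and $P$ is $(2k-1)$-simplicial, every face of $P$ of dimension less than $k$ is a simplex, so a $(d+1)$-subset $S\subseteq \mathrm{vert}(P)$ spans a member of $\Delta$ precisely when every subset of $S$ of size at most $k$ is the vertex set of a face of $P$; in particular the definition of $\Delta$ depends only on the $(2k-1)$-skeleton of $P$, which is a genuine simplicial complex. First I would verify that such an $S$ is affinely independent and that the $d$-simplices of $\Delta$ meet face-to-face, so that $\Delta$, completed by all faces of its maximal cells, is a geometric simplicial complex with $|\Delta|\subseteq P$; then I would show $\Delta$ is a pure $d$-pseudomanifold whose $(d-1)$-faces lying in exactly one $d$-simplex are contained in $\partial P$, i.e.\ a candidate ridge not inside a facet of $P$ is shared by a second candidate $d$-simplex (local two-sidedness). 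I expect this step to be essentially combinatorial, patterned on the analysis of $(k-1)$-stacked simplicial spheres and using convexity of $P$ together with simpliciality of its low skeleton.

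The crux is the Lefschetz estimate. By Karu's theorem $g_k(P)=\dim\mathrm{Prim}^k\,\IH(P)$ for a Lefschetz element $\ell$, and $g_k(P)=0$ is equivalent to $\ell^{\,d-2k+1}\colon \IH^k(P)\to \IH^{d-k+1}(P)$ being injective (hard Lefschetz ``sharp'' at level $k$). Setting $Q=|\Delta|$ and supposing $Q\ne P$, I would cut $P$ along the $(d-1)$-pseudomanifold $M=\partial Q\setminus\partial P$ into $Q$ and $\overline{P\setminus Q}$, compare $\IH(P)$ with the combinatorial intersection homology of the pieces and of $M$, and aim at a Mayer--Vietoris / localization inequality to the effect that each uncovered collar raises $\dim\mathrm{Prim}^k$ by at least one; that inequality is the QGLBT. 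As a sub-step and sanity check, the case $k=d/2$ is forced since then $2k-1=d-1$, so $P$ is simplicial and the statement reduces to the simplicial GLBT of Murai--Nevo together with their identification of the $(k-1)$-stacked triangulation with the candidate complex; for $k<d/2$ I would attempt an induction on the dimension, passing to vertex figures $P/v$ (again $(2k-1)$-simplicial) and to vertex links in $\Delta$, and invoking known comparison results for toric $g$-vectors under passage to faces.

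I expect the main obstacle to be precisely this: quantifying how an uncovered region of $P$ is detected by $\mathrm{Prim}^k\,\IH(P)$. In the simplicial world the argument is run inside the Stanley--Reisner ring and its generic Artinian reductions, but for general $P$ there is no ambient graded ring, so the estimate must be carried out inside $\IH(P)$ itself --- restricting and corestricting local $\IH$-sheaves, using their flabbiness, and tracking the Lefschetz decomposition across the gluing locus $M$ --- and this is where the genuine difficulty sits. Granting the inequality $g_k(P)\ge \varepsilon(\Delta,P)$, the theorem is immediate: $g_k(P)=0$ forces $\varepsilon(\Delta,P)=0$, hence $|\Delta|=P$, and by the first step $\Delta$ is a triangulation of $P$.
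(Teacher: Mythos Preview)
Your plan misses the key observation that drives the paper's argument, and as a consequence you set yourself a much harder task than necessary. You correctly identify the obstacle: for a general polytope there is no Stanley--Reisner ring in which to run the simplicial GLBT machinery, and you propose to compensate by developing a new inequality $g_k(P)\ge \varepsilon(\Delta,P)$ directly in $\IH(P)$ via Mayer--Vietoris along $\partial|\Delta|\setminus\partial P$. But the hypothesis that $P$ is $(2k-1)$-simplicial is there precisely to \emph{restore} the ring: since all faces of dimension $\le 2k-1$ are simplices, one has $\IH^i(P)\cong \AR^i(\text{skel}_{2k-1}P)$ for all $i\le k$ (in fact $i\le 2k$), where $\AR$ is the Artinian reduction of the Stanley--Reisner ring of the simplicial $(2k-1)$-skeleton. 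Karu's hard Lefschetz for $\IH(P)$ then becomes a Lefschetz statement for $\AR$ in degrees $\le k$, and at that point one can import the simplicial GLBT proofs of Murai--Nevo or Adiprasito essentially verbatim. The paper does \emph{not} deduce the theorem from its QGLBT; that inequality (which bounds $g_k$ below by Betti-type invariants $\alpha_{k-1}(P_W)$ of induced subcomplexes, not by your $\varepsilon(\Delta,P)$) is a separate result with different applications.

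Concretely, the paper runs an induction over faces $F$ of $P$: flabbiness of the $\IH$ sheaf gives $g_k(F)\le g_k(P)=0$, and via the isomorphism above plus the simplicial criterion one gets that $\Delta_{V(F)}$ is Cohen--Macaulay of dimension $\dim F$; McMullen's argument shows it is geometrically embedded in $F$; and since by induction $\partial F\subseteq |\Delta_{V(F)}|$, a Cohen--Macaulay $i$-complex in $\R^i$ containing $\partial F$ must be a ball, hence equals $F$. Note in particular that your step~(i) --- pseudomanifoldness and local two-sidedness of $\Delta$ --- is not ``essentially combinatorial'': in the simplicial GLBT proofs this already requires the algebraic input (Cohen--Macaulayness from the Lefschetz property), and the same is true here. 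Your proposed step~(ii) is left as ``granting the inequality'', which is exactly the missing piece; the paper sidesteps it entirely.
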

This result is tight: note that the pyramid over $P$ satisfies $g(\Pyr(P))=g(P)$. Thus by taking a $(d-(2k-1))$-fold pyramid $Q=\Pyr(\Pyr(\cdots(\Pyr(P))\cdots)$ over any simplicial $(2k-1)$-polytope $P$, we have $g_k(Q)=0$, $Q$ is $(2k-2)$-simplicial, and the clique complex over the $(k-1)$-skeleton of $Q$ is typically a strict subset of $Q$.

The key point in the proof of Theorem~\ref{thm:toricGLBT} is noticing that, up to degree $k$, the module ${\IH}(P)$ is isomorphic to the corresponding Stanley-Riesner (quotient) ring, and then use either of the proofs of the GLBT, \cite{Murai-Nevo:GLBT} or \cite{Adiprasito:toric}, via crystallization or propagation resp.

The only $d$-polytope with $g_1(P)=0$ is the $d$-simplex.
The simplest open case then is to characterize $2$-simplicial $4$-polytopes $P$ with $g_2(P)=0$. One reduces to the prime case, namely when $P$ has no missing tetrahedra $T$, as otherwise $P=P_1\cup_T P_2$ and each $g_2(P_i)=0$, $i=1,2$. Paffenholz and Werner \cite{Paffenholz-Werner} constructed examples of such polytopes with $n$ vertices, for each $n\ge 13$.
We give a structural condition that all such polytopes must satisfy,
given by restricting the following theorem to $k=2$.
Let $P_W$ denote the polytopal complex whose faces are all faces of a polytope $P$ whose vertexset is contained in $W$, where $W$ is a subset of the vertexset $V(P)$ of $P$. 
Say a simplex $\sigma$ is a \emph{missing simplex} in $P$ if $\partial \sigma$ is a subcomplex of $\partial P$ and $\sigma$ is not a face of $P$.

\begin{theorem}\label{thm:induced}
Let $P$ be $d$-polytope with $g_k(P)=0$, $d\ge 2k\ge 2$, and let $W$ denote any subset of the vertices of $P$. Then the embedding of $P_W$ into the closure of $\partial P \setminus P_{V(P)\setminus W}$ induces the zero map on the
 $(k-1)$th rational homology groups.
Moreover, any missing $k$-simplex in $P$ is contained in a facet of~$P$.
\end{theorem}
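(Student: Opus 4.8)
The plan is to obtain both assertions as the $g_k=0$ specialization of a \emph{quantitative} GLBT --- a lower bound for $g_k(P)$ in terms of the homology of the subcomplexes $P_W$ and of the missing simplices of $P$. Throughout we may assume $k\ge2$, since $g_1(P)=0$ forces $P$ to be a simplex, for which both claims are void. Let $\IH(P)$ be the combinatorial intersection cohomology module of the complete central fan $\Sigma$ of $P$ (\cite{BBFK,B-Lunts}), a finitely generated graded module over the polynomial ring $A$ on $\R^{d}$; fix a generic linear system of parameters $\theta\subset A_2$ and a generic Lefschetz element $\ell\in A_2$, and set $\overline{\IH}(P):=\IH(P)/\theta\IH(P)$. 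By Karu's theorem \cite{Karu:g}, $\dim_\Q\overline{\IH}(P)^{2i}=h_i(P)$ and $\ell^{\,d-2i}\colon\overline{\IH}(P)^{2i}\to\overline{\IH}(P)^{2(d-i)}$ is an isomorphism for $2i\le d$; as $k\le d/2$ this gives
\[
 g_k(P)=\dim_\Q\overline{\IH}(P)^{2k}-\dim_\Q\overline{\IH}(P)^{2k-2}=\dim_\Q\mathrm{Prim}^{2k}(P)=\dim_\Q\bigl(\overline{\IH}(P)/\ell\,\overline{\IH}(P)\bigr)^{2k},
\]
where $\mathrm{Prim}^{2k}(P)=\ker\bigl(\ell^{\,d-2k+1}\colon\overline{\IH}(P)^{2k}\to\overline{\IH}(P)^{2(d-k)+2}\bigr)$. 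Thus $g_k(P)=0$ is precisely the vanishing of this $g_k$-dimensional space, so the two assertions amount to the vanishing of suitable classes in it, which we construct below.

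Write $C:=\overline{\partial P\setminus P_{V(P)\setminus W}}$; one checks that $C$ is the subcomplex of $\partial P$ generated by the facets of $P$ meeting $W$, and in particular $P_W\subseteq C$. The quantitative GLBT I would prove asserts, for every $W\subseteq V(P)$, the existence of a cycle-class map
\[
 \psi_W\colon H_{k-1}(P_W;\Q)\ \longrightarrow\ \mathrm{Prim}^{2k}(P)
\]
with $\ker\psi_W\subseteq\ker\bigl(H_{k-1}(P_W;\Q)\xrightarrow{\ \iota_\ast\ }H_{k-1}(C;\Q)\bigr)$, where $\iota$ is the inclusion; then $\operatorname{rank}\iota_\ast\le\operatorname{rank}\psi_W\le\dim_\Q\mathrm{Prim}^{2k}(P)=g_k(P)$, and the first assertion of the theorem is the case $g_k(P)=0$. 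For the ``moreover'' clause I would show in addition that a missing $k$-simplex $\sigma$ of $P$ (so $\partial\sigma\subseteq\partial P$ and $\sigma\notin P$) yields, via $W=V(\sigma)$ --- for which $P_{V(\sigma)}=\partial\sigma\cong S^{k-1}$ --- a class $\psi_{V(\sigma)}\bigl([\partial\sigma]\bigr)\in\mathrm{Prim}^{2k}(P)$ that is \emph{nonzero if and only if $\sigma$ is not contained in a facet of $P$}: the ``if'' because a $(k-1)$-cycle whose faces all lie in a single (contractible) facet carries no homology there, the ``only if'' being the substantive point. Hence $g_k(P)=0$ forces $\psi_{V(\sigma)}([\partial\sigma])=0$, so every missing $k$-simplex of $P$ is contained in a facet, as claimed. (Note this detection is strictly finer than the vanishing of $\iota_\ast$ on $P_{V(\sigma)}$, which is automatic there.)

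The construction of $\psi_W$ is the crux. The natural route is to pass to the fan: the decomposition of $\Sigma$ into the subfans over $C$ and over $\overline{\partial P\setminus P_W}$ yields a Mayer--Vietoris/localization sequence for combinatorial intersection cohomology, and, using $|\partial P|\cong S^{d-1}$ and Lefschetz--Alexander duality (valid since $2\le k\le d/2<d-1$, so $H_{k-1}(\partial P)=H_k(\partial P)=0$), one realizes $H_{k-1}(P_W;\Q)$ --- paired against $H_{d-k-1}(P_{V(P)\setminus W};\Q)$ by the linking form in the sphere --- inside the Poincar\'e pairing on $\overline{\IH}(P)$, the primitive part in degree $2k$ accounting for exactly the ``new'' classes. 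The main difficulty I expect is verifying that these combinatorially defined classes land in $\mathrm{Prim}^{2k}(P)$ and that the topological linking form is computed by the Lefschetz form on $\overline{\IH}(P)$ --- essentially the $\IH$-module incarnation of the classical fact that small missing faces obstruct stackedness. When $P$ is $(2k-1)$-simplicial this can be bypassed, since there $\overline{\IH}(P)$ agrees in degrees $\le 2k$ with a Stanley--Reisner quotient and one may run the crystallization \cite{Murai-Nevo:GLBT} or propagation \cite{Adiprasito:toric} arguments used for Theorem~\ref{thm:toricGLBT} verbatim.
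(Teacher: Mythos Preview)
Your overall strategy---derive both assertions from a quantitative lower bound on $g_k(P)$---is the paper's as well; the first assertion is immediate from Theorem~\ref{thm:QGLBT}, and your inequality $\operatorname{rank}\iota_\ast\le g_k(P)$ is a weakening of it. The paper, however, does not build the cycle-class map via Mayer--Vietoris and linking forms as you sketch. It passes to the second barycentric subdivision $P'$, so that the subdivided $P_W$ acquires a regular closed neighborhood $N$ in $\partial P'$; then the Novik--Swartz socle isomorphism embeds $H^{k-1}(N)^{\binom{d+1}{k}}$ into $\AR^k(N)/\ell\,\AR^{k-1}(N)$, and the decomposition theorem for the pullback $\IH(P)\hookrightarrow\AR(P')$ identifies the cokernel with Gysin images, which correspond exactly to the classes dying under $\iota_\ast$. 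Your route may be workable, but the difficulty you flag (landing in the primitive part and matching the Lefschetz form with linking) is real and not addressed; as written this is a plan rather than a proof.

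For the ``moreover'' clause there is a genuine gap. You propose to take $W=V(\sigma)$ and use that $\psi_{V(\sigma)}([\partial\sigma])\neq 0$ precisely when $\sigma$ lies in no facet. But the only property of $\psi_W$ you have set up is $\ker\psi_W\subseteq\ker\iota_\ast$, and, as you yourself observe, $\iota_\ast[\partial\sigma]$ can vanish even when $\sigma$ is not contained in any facet, so that inclusion gives no information here. The sharper ``iff'' is an additional, unproved hypothesis that does not follow from your QGLBT framework (and your justification for the easy direction actually argues $\iota_\ast=0$, not $\psi=0$; note also that your ``if'' and ``only if'' labels are swapped). The paper does \emph{not} take $W=V(\sigma)$: instead it fixes a vertex $v\in\sigma$ and argues, following Kalai, that if the $(k{-}1)$-face $\sigma-v$ is not contained in $\st_v(\partial P)$ then attaching it to the contractible star produces a nontrivial class in $\IH^k(\st_v P\cup(\sigma-v))\big/\ell\,\IH^{k-1}(\st_v P\cup(\sigma-v))$, contradicting $g_k(P)=0$. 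Once $\sigma-v\subset\st_v(\partial P)$, all vertices of $\sigma$ lie in a common face and hence in a common facet. This local star argument is the missing ingredient in your proposal.
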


Theorem~\ref{thm:induced} is inspired by, and extends, Kalai's result \cite{Kalai:Rig, KalaiCA} that when such $P$ is simplicial it has no missing $k$-simplices\footnote{The case $d=2k>4$ was not worked out by Kalai; later Nagel~\cite[Cor.4.8]{Nagel-empty} proved this case as well.}.
More generally, as we shall see,
the following extension of the quantitative GLBT (QGLBT) by Adiprasito \cite{Adiprasito:toric} to the toric case gives a lower bound on $g_k(P)$ in terms of topological Betti numbers of induced subcomplexes of $P$. Specifically,

\begin{theorem}\label{thm:QGLBT}
Let $P$ be a $d$-polytope, and $W$ any subset of its vertices $V=V(P)$. Let $k\le \frac{d}{2}$. Then the induced simplicial subcomplex $P_W$ satisfies \[{\binom{d+1}{k}}\alpha_{k-1}(P_W)\le g_k(P).\]
\end{theorem}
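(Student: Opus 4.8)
The plan is to deduce the inequality from Karu's Hard Lefschetz theorem for the combinatorial intersection cohomology ${\IH}(P)$, following his proof that the toric $g$-vector is nonnegative but keeping track of the contribution of a nonvanishing $(k-1)$-cycle of $P_W$. Grade ${\IH}(P)$ so that $\dim_{\mathbb R}{\IH}(P)_i=h_i(P)$ and a Lefschetz element $\ell$ has degree $1$. Karu's theorem says $\ell^{\,d-2i}\colon{\IH}(P)_i\to{\IH}(P)_{d-i}$ is an isomorphism for every $i$, and since $k\le d/2$ this forces $\ell\colon{\IH}(P)_{k-1}\to{\IH}(P)_k$ to be injective, so
\[
  g_k(P)=\dim_{\mathbb R}{\IH}(P)_k-\dim_{\mathbb R}{\IH}(P)_{k-1}=\dim_{\mathbb R}\bigl({\IH}(P)_k/\ell\cdot{\IH}(P)_{k-1}\bigr).
\]
Writing $Q_k:={\IH}(P)_k/\ell\cdot{\IH}(P)_{k-1}$ for this cokernel and $\alpha_{k-1}(P_W)=\dim_{\mathbb Q}\widetilde H_{k-1}(P_W;\mathbb Q)$, the task reduces to exhibiting inside $Q_k$ a subspace of dimension $\binom{d+1}{k}\,\alpha_{k-1}(P_W)$.

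The combinatorial input linking $\widetilde H_{k-1}(P_W)$ to ${\IH}(P)$ is the canonical degree-preserving map from the ring $R$ of conewise-polynomial functions on the fan over $\partial P$ (the Stanley--Reisner ring of $\partial P$ when $P$ is simplicial) into the equivariant $\IH$, coming from the morphism of sheaves $\mathcal A\to\mathcal L$ from the cellular structure sheaf to the minimal extension sheaf; reducing modulo a linear system of parameters gives a map $\psi\colon R\to{\IH}(P)$. In the $(2k-1)$-simplicial range $\psi$ is the isomorphism in degrees $\le k$ already used in Theorem~\ref{thm:toricGLBT}. Now fix a triangulation $T$ of $P_W$ introducing no new vertices; a cycle $z=\sum_\tau c_\tau\tau\in\widetilde Z_{k-1}(T;\mathbb Q)$ maps to the degree-$k$ element $\psi\bigl(\sum_\tau c_\tau x_{V(\tau)}\bigr)\in{\IH}(P)_k$, and I would verify: (i) if $z$ is a boundary, this element lies in $\ell\cdot{\IH}(P)_{k-1}$, so the construction descends to a linear map $\widetilde H_{k-1}(P_W;\mathbb Q)\to Q_k$; and (ii) this map is injective, the crux being that a nonzero $(k-1)$-homology class of an induced subcomplex cannot be annihilated modulo $\ell$, which I would prove by localizing ${\IH}(P)$ along the faces through the vertices of $W$ and applying Hard Lefschetz to their links, in the style of the propagation argument of \cite{Adiprasito:toric}. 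Finally, to replace $\alpha_{k-1}(P_W)$ by $\binom{d+1}{k}\,\alpha_{k-1}(P_W)$ I would transport the amplification already present in the simplicial QGLBT of \cite{Adiprasito:toric}, where a single nonvanishing $(k-1)$-cycle produces a $\binom{d+1}{k}$-dimensional family of classes in $Q_k$ --- morally one for each of the $\binom{d+1}{k}$ faces of dimension $k-1$ of a $d$-simplex on $d+1$ generic auxiliary rays.

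The step I expect to be the main obstacle is making (i), (ii), and the $\binom{d+1}{k}$-fold amplification work for genuinely non-simplicial $P$. When $P$ has non-simplex faces the map $\mathcal A\to\mathcal L$ is far from an isomorphism near the corresponding cones --- ${\IH}(P)$ is strictly larger than $R$ there and, unlike in the simplicial case, is not generated in degree one --- so the Stanley--Reisner bookkeeping underlying both proofs of the GLBT is unavailable, and one must run the whole argument with $\mathcal L$ itself: localizing at a cone $\sigma$, expressing $\mathcal L(\sigma)$ from $\mathcal L(\partial\sigma)$ by minimality, and invoking Karu's Hard Lefschetz for $\IH$ of the possibly non-simplicial link $\lk(\sigma)$, which is accessible only through the BBFK and Bressler--Lunts machinery rather than through Reisner-type criteria. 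Concretely, the delicate points are that a cycle supported on faces with vertices in $W$ really survives these localizations, and that the $d+1$ auxiliary rays can be chosen compatibly with a Lefschetz element $\ell$ so that no nontrivial relation among the $\binom{d+1}{k}$ propagated classes holds modulo $\ell\cdot{\IH}(P)_{k-1}$; after localization both collapse to rank and vanishing statements for $\IH$ of links of non-simplicial faces, and establishing enough of those to push through the propagation is where the real work lies.
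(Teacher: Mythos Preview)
Your proposal has a genuine gap at the very first reduction: you set $\alpha_{k-1}(P_W)=\dim_{\mathbb Q}\widetilde H_{k-1}(P_W;\mathbb Q)$, but in the statement $\alpha_{k-1}(P_W)$ is the rank of the induced map $H_{k-1}(P_W)\to H_{k-1}(\cl(\partial P\setminus P_{V\setminus W}))$, not a Betti number of $P_W$. The Betti-number version you set out to prove is false already in the simplicial case. For the bipyramid $P$ over a triangle ($d=3$, $g_1(P)=1$) with $W$ the two apices, the two points of $P_W$ give $\widetilde b_0(P_W)=1$ while $\alpha_0(P_W)=0$ (since $\cl(\partial P\setminus P_{V\setminus W})=\partial P$ is connected), and your target inequality would read $\binom{4}{1}\cdot 1\le 1$. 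So step~(ii), injectivity of $\widetilde H_{k-1}(P_W)\to Q_k$, cannot hold in general; at best one can hope for an injection of the \emph{image} in the homology of $\cl(\partial P\setminus P_{V\setminus W})$, and your outline contains no mechanism that singles this image out.

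This is not a cosmetic point but the crux of the argument, and the paper handles it by a route different from yours. Instead of working with the map $\mathcal A\to\mathcal L$ directly, one passes to the second barycentric subdivision $P'$ (simplicial), so that $P'_{W'}$ has a regular neighbourhood $N$; there a purely simplicial lemma (the Novik--Swartz socle computation, or equivalently the Ishida/Koszul double complex) produces an injection $H^{k-1}(N)^{\binom{d+1}{k}}\hookrightarrow \AR^k(N)/\ell\,\AR^{k-1}(N)$. One then returns to $P$ via the decomposition theorem, which embeds $\IH(P)$ into $\AR(P')$ with cokernel generated by Gysin images; the key observation is that these Gysin classes correspond precisely to cycles that become trivial in $H_{k-1}(\cl(\partial P\setminus P_{V\setminus W}))$, and this is exactly why only $\alpha_{k-1}(P_W)$, rather than the full Betti number, survives into $\IH^k(P)/\ell\,\IH^{k-1}(P)$. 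Note also that the multiplicity $\binom{d+1}{k}$ arises from this socle identification on the simplicial subdivision, not from ``$d+1$ generic auxiliary rays'' as you suggest.
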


Here
$\alpha_{k-1}(P_W)$ denotes the dimension of the image of \[H_{k-1}(P_W)\rightarrow H_{k-1}(\mathrm{cl}(\partial P\setminus P_{V\setminus W})),\]
where we consider homology with real coefficients.

We use Theorem~\ref{thm:QGLBT} to extend the recent proof of Kalai's conjecture on simplicial polytopes approximating smooth convex bodies \cite{Adiprasito-Nevo-Samper:smooth} to the non-simplicial case.

\begin{corollary}\label{cor:smooth}
Let $K$ be a smooth convex body in $\R^d$, and $(P_n)$ a sequence of $d$-polytopes such that $P_n\rightarrow K$ in the Hausdorff metric. Then for any $1\le k\le d/2$,
$g_k(P_n)\rightarrow \infty$ as $n\rightarrow\infty$.

Further, if $K$ has a $C^2$ boundary, and a $d$-polytope $P$ is $\epsilon$-close to $K$ for some small enough $\epsilon>0$, then $g_k(P)=\Omega(\epsilon^{-\frac{d-1}{2}})$.
\end{corollary}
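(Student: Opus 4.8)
The plan is to combine Theorem~\ref{thm:QGLBT} with a quantitative estimate of how much "induced homology" a polytope close to a smooth body must carry. First I would reduce the limit statement to the quantitative one: if $P_n\to K$, then eventually $P_n$ is $\epsilon_n$-close to $K$ with $\epsilon_n\to 0$, so the second assertion gives $g_k(P_n)=\Omega(\epsilon_n^{-(d-1)/2})\to\infty$. (If $K$ is merely smooth, not $C^2$, one instead argues directly: fix a large constant $N$; for $n$ large, the boundary $\partial K$ can be partitioned into $N$ geodesically convex patches whose boundaries are "far apart", and one extracts a vertex set $W$ as below with $\alpha_{k-1}(P_{n,W})\ge 1$ for each patch count, forcing $g_k(P_n)\ge N/\binom{d+1}{k}$; letting $N\to\infty$ finishes.)

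The heart is thus the $C^2$ case. Here I would follow the strategy of \cite{Adiprasito-Nevo-Samper:smooth}: the positive curvature of $\partial K$ forces many localized "bumps" in $\partial P$, each of which contributes an independent class to $\alpha_{k-1}$ of a suitably chosen induced subcomplex. Concretely, fix a maximal $c\sqrt\epsilon$-separated net on $\partial K$; by a volume/packing argument this net has size $\Theta(\epsilon^{-(d-1)/2})$. Around each net point $x$, the $C^2$ bound means $\partial K$ — and hence $\partial P$, since $P$ is $\epsilon$-close — looks like a graph of a function with curvature bounded above and below on a ball of radius $\asymp\sqrt\epsilon$; on such a patch the vertices of $P$ lying near $x$ form a "cap" whose induced subcomplex $P_{W_x}$ (with $W_x$ the vertices in that cap) has nontrivial $(k-1)$-homology that does not die in $\mathrm{cl}(\partial P\setminus P_{V\setminus W_x})$ — one sees a $(k-1)$-cycle (e.g., the boundary of a $k$-dimensional "slab" through the cap) that bounds inside the cap but not in the complement. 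I would make the choice of $W$ a disjoint union over a positive fraction of net points, engineered so that the $\alpha_{k-1}$-contributions add, giving $\alpha_{k-1}(P_W)=\Omega(\epsilon^{-(d-1)/2})$. Then Theorem~\ref{thm:QGLBT} yields $g_k(P)\ge\binom{d+1}{k}^{-1}\alpha_{k-1}(P_W)=\Omega(\epsilon^{-(d-1)/2})$.

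Two technical points need care. First, the single induced complex $P_W$ must simultaneously witness all the local cycles: I would verify that the relevant $(k-1)$-cycles sitting in different caps map to linearly independent classes in $H_{k-1}(\mathrm{cl}(\partial P\setminus P_{V\setminus W}))$ — this follows because the complement's homology splits along the (well-separated) caps via a Mayer–Vietoris argument, each cap contributing at least one dimension. Second, one must confirm that near each net point the $\epsilon$-closeness really does produce vertices arranged in the required convex-position pattern; this is exactly the local convex-geometry input from \cite{Adiprasito-Nevo-Samper:smooth}, which I would invoke essentially verbatim, only replacing "missing simplex / Stanley–Reisner cycle" there by "induced-homology class" here.

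The main obstacle I anticipate is the second technical point combined with the book-keeping of constants: showing that a genuinely non-simplicial polytope $P$, merely $\epsilon$-close to $K$, has, on each $\sqrt\epsilon$-cap, an induced subcomplex with nonzero image in the complement's $(k-1)$-homology — the simplicial proof uses missing faces, which need not exist here, so one must instead locate actual topological $(k-1)$-cycles and argue they are non-bounding in the complement. I expect this to require a careful local Mayer–Vietoris / nerve argument rather than the cleaner algebraic (Stanley–Reisner) reasoning available in the simplicial setting.
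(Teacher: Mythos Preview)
Your plan is essentially the paper's: invoke the geometric construction of \cite{Adiprasito-Nevo-Samper:smooth} to produce many pairwise separated regions on $\partial K$, take $W$ to be the union of the vertices of $P$ landing in these regions, show each region contributes at least $1$ to $\alpha_{k-1}$, and feed the result into Theorem~\ref{thm:QGLBT}. (A minor slip: the bound is $g_k(P)\ge\binom{d+1}{k}\alpha_{k-1}(P_W)$, not $\binom{d+1}{k}^{-1}\alpha_{k-1}(P_W)$.)

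The one real difference is in how the local $(k-1)$-cycle is produced, and the paper's choice dissolves precisely the obstacle you flag. Instead of caps around net points and a search for combinatorial cycles inside $P_{W_x}$, the paper places $b$ actual $(k-1)$-spheres $\gamma_1,\dots,\gamma_b$ in $\partial K$ with disjoint $\epsilon$-neighborhoods, each retracting onto $\gamma_i$. With $W_i$ the vertices of $P$ inside $\gamma_i+\epsilon'$ and $\pi_P$ the closest-point projection onto $P$, one has the chain of inclusions
\[
\pi_P\gamma_i\ \subset\ P_{W_i}\ \subseteq\ \partial P\cap\conv W_i\ \subseteq\ \gamma_i+\epsilon\ \simeq\ S^{k-1},
\]
so $\pi_P\gamma_i$ is a ready-made $(k-1)$-cycle in $P_{W_i}$ whose image in $H_{k-1}(\cl(\partial P\setminus P_{V\setminus W_i}))$ is nonzero; no simpliciality, no missing faces, and no local Mayer--Vietoris are needed. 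Independence is arranged by requiring that no proper face of $P$ contains vertices from two different $W_i$, so $P_W$ decomposes and $\alpha_{k-1}(P_W)=\sum_i\alpha_{k-1}(P_{W_i})\ge b$. The smooth (non-$C^2$) case is handled in parallel by letting $b$ be any prescribed constant, rather than by reducing to the $C^2$ estimate.
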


Outline: in Section~\ref{sec:IH} we recall the construction of $\IH(P)$ and prove Theorem~\ref{thm:toricGLBT}, in Section~\ref{sec:QGLBT} we prove the QGLBT, namely Theorem~\ref{thm:QGLBT}, and deduce from it Theorem~\ref{thm:induced} and Corollary~\ref{cor:smooth}.

\section{Intersection cohomology for general fans}\label{sec:IH}
In order to work in the context of general polytopes, we use the Barthel-Brasselet-Fieseler-Kaup \cite{BBFK} and Karu \cite{Karu:g} construction of the equivariant intersection cohomology sheaf, constructed inductively on the $i$th skeleton,
by iteratively applying the Lefschetz theorem to faces of dimension $i-1$. In particular, the equivariant sheaf $L(P)$ of a polytope $P$ is constructed as a subspace of
$L(\sd P)$, where $\sd P$ is the simplicial polytope whose boundary complex is the derived subdivision of the boundary complex of $P$.
The stalk over a proper face $\sigma$ of $P$ in $L(P)$ is a free module over the primitive elements with respect to
the operation of the
Lefschetz element induced on ${\IH}(\sigma)$, the  (non-equivariant) intersection cohomology of $\sigma$. It follows in particular that low-degree intersection cohomology depends more on the simplicial structure than higher degrees. Specifically,
for a geometric simplicial complex $\Delta$ in $\mathbb{R}^d$ denote by $\AR(\Delta)$ the quotient of the Stanley-Riesner  ring $\R[\Delta]$ of $\Delta$ over $\mathbb{R}$ by the ideal generated by the $d$ elements of degree $1$ corresponding to the embedding of the vertices of $\Delta$ in $\mathbb{R}^d$.

\begin{rem}
We adopt Karu's abuse of notation and do not adopt the natural grading for intersection cohomology arising from toric geometry (where we would naturally only have intersection cohomology in even degrees) and instead use degrees coming from the underlying model for the intersection ring, the Stanley-Reisner ring.
\end{rem}

\begin{proposition}\label{prop:{IH}&A}
If the $(2k-1)$-skeleton  $X_{\le 2k-1}$ of a geometric polyhedral complex $X$ is simplicial, then ${\IH}^i(X)\cong \AR^i(X_{\le 2k-1})$ for every $i\le 2k$.
\end{proposition}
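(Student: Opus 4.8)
The plan is to induct on the dimension of the faces of $X$ and to compare, in internal degrees at most $2k$, the minimal extension sheaf $L(X)$ of Barthel--Brasselet--Fieseler--Kaup and Karu with the Stanley--Reisner sheaf of the simplicial complex $X_{\le 2k-1}$. Recall from the construction recalled above that $L(X)$ is assembled by induction on skeleta: once $L$ is known on the $(j-1)$-skeleton, the stalk $L(X)_\sigma$ over a $j$-face $\sigma$ is the free module over the algebra of Lefschetz/primitive operators attached to $\sigma$ whose reduction modulo the relevant maximal ideal is $\IH(\sigma)$, the restriction maps to codimension-one faces being dictated by minimality; the module $\IH(X)$ is then extracted from the resulting cellular complex after killing the $d$ linear forms coming from the coordinates of $\R^d$. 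The structural point to exploit is that the Lefschetz steps performed at a face $\sigma$, and the truncations built into them, only affect internal degrees comparable to $\dim\sigma$, so that below degree $2k$ --- where all faces in play have simplicial $(2k-1)$-skeleton --- the construction becomes rigid.

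First I would settle the simplicial contribution. For a simplex $\tau$ the stalk $L(X)_\tau$ is simply the polynomial ring $A_\tau$, and the restriction maps between stalks over simplicial faces are the restriction maps of the Stanley--Reisner sheaf (set the off-face variables to zero); this is the classical identification of toric with combinatorial intersection cohomology in the simplicial case, together with the fact that the chosen $d$ coordinate forms restrict to a linear system of parameters on $\R[X_{\le 2k-1}]$ because every face is full-dimensional in its affine span. Hence the restriction of $L(X)$ to $X_{\le 2k-1}$, with all its structure maps, is identified with the Stanley--Reisner data of $X_{\le 2k-1}$ modulo the $d$ coordinate forms. Since $\R[X_{\le 2k-1}]$ is generated in degree one, $\AR^i(X_{\le 2k-1})$ depends only on faces of dimension at most $i-1$, so for $i\le 2k$ one may freely replace $X_{\le 2k-1}$ by the smaller simplicial complex $X_{\le i-1}$.

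Next I would control the high-dimensional, possibly non-simplicial faces. By Stanley's recursion, $\dim\IH^i(X)=h_i(X)$ is a fixed $\Z$-linear combination of the numbers $g_\ell(F)$ over proper faces $F$ with $\ell\le i\le 2k$ (using Poincaré duality $h_i=h_{d-i}$ to reduce to $i\le d/2$ where necessary): for faces with $\dim F\le 2k-1$, $F$ is a simplex and $g_\ell(F)$ is the primitive dimension read off from $\AR(\partial F)$; for faces with $\dim F\ge 2k$, only $g_\ell(F)$ with $\ell\le 2k$ occurs, and since $\dim F$ is smaller and $F_{\le 2k-1}\subseteq X_{\le 2k-1}$ is simplicial, the inductive hypothesis rewrites it in terms of $\AR^\ell(F_{\le 2k-1})$. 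Matching these terms against the analogous description of $\dim\AR^i(X_{\le 2k-1})$ yields equality of dimensions, and the isomorphism itself is obtained by the corresponding comparison of the cellular complex computing $\IH^{\le 2k}(X)$ with the Koszul-type presentation of $\R[X_{\le 2k-1}]/(\theta)$, once the high-dimensional stalks are rewritten via the induction.

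The step I expect to be the real obstacle is this last comparison of complexes, together with checking that nothing leaks down from higher degrees: one must verify that the inherent isomorphism ambiguity of the minimal extension sheaf, and the truncations $\tau_{<\dim\sigma}$ applied at each face $\sigma$, occur strictly in internal degrees above $2k$ --- this is where $2k\le d$ and ``non-simplicial $\Rightarrow$ dimension $\ge 2k$'' enter --- so that the restriction of $L(X)$ to degrees $\le 2k$ is genuinely canonical and coincides on the nose with the Stanley--Reisner model. Granting this rigidity, gluing the local identifications into a global isomorphism of graded $\AR(X_{\le 2k-1})$-modules is routine bookkeeping.
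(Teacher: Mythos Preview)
The paper does not supply a proof of this proposition. It is presented as a consequence of the description of the equivariant sheaf $L$ in the paragraph immediately preceding it: the stalk of $L$ over a simplex $\tau$ is the rank-one free $A_\tau$-module, so on the simplicial $(2k-1)$-skeleton $L$ coincides with the Stanley--Reisner sheaf, and the authors add only that ``it follows in particular that low-degree intersection cohomology depends more on the simplicial structure than higher degrees.'' Your proposal is an attempt to turn that one sentence into an argument --- identifying stalks over simplices with Stanley--Reisner data, then controlling the non-simplicial faces by induction on dimension and by Stanley's $h$-recursion --- so there is no competing approach in the paper to contrast with; you are fleshing out what the authors leave implicit, along the same line they indicate.

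One caution about your final paragraph. The heuristic that the non-simplicial contributions ``occur strictly in internal degrees above $2k$'' is not correct as stated: the extra primitive generators of $L_\sigma$ at a non-simplicial face $\sigma$ already appear in degrees $1,\dots,\lfloor\dim\sigma/2\rfloor$, and the restriction $L_\sigma\to\Gamma(\partial\sigma,L)$ can have nontrivial kernel in low degree as well (for instance in degree~$1$ for the cone over a square). Thus the equivariant modules $\Gamma(X,L)$ and $\R[X_{\le 2k-1}]$ genuinely differ in degrees $\le 2k$, and the claimed isomorphism only emerges \emph{after} passing to the Artinian reduction. Your instinct that this last step is where the real work lies is right, and the numerical match via Stanley's recursion is a sound route to equality of dimensions; but promoting it to a module isomorphism compatible with the Lefschetz element --- which is what the application in the proof of Theorem~\ref{thm:toricGLBT} actually uses --- needs a more careful argument than ``nothing leaks down from higher degrees.''
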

This proposition allows us to prove
Theorem \ref{thm:toricGLBT} by following either of the proofs \cite{Murai-Nevo:GLBT} and \cite{Adiprasito:toric}.


\begin{proof}[Proof of Theorem~\ref{thm:toricGLBT}
]
Recall $\Delta$ denotes the simplicial complex  consisting of all subsets $\sigma$ of vertices of the $d$-polytope $P$ all whose subsets of size $\le k$ are simplices in $\partial P$. Denote by $\Delta_W$ the induced subcomplex of $\Delta$ on the vertex set $W$, and by $V(F)$ the vertices of a face $F$ of $P$.
We show the following three properties for any $i$-face $F$ of $P$:

\begin{compactenum}[(A)]
\item $\Delta_{V(F)}$ is Cohen-Macaulay of dimension $\dim F$. \\
We can assume that $F$ is of dimension at least $2k$. It follows immediately from flabbiness of the intersection cohomology sheaf that $g_j(F)\le g_j(P)$ for all $j$~\cite{Braden-MacPherson, Braden-Remarks}; hence the claim follows at once from \cite[Cor. 4.7]{Adiprasito:toric} for $\Delta_{V(F)}$, which is indeed applicable using Proposition~\ref{prop:{IH}&A} and Karu's hard Lefschetz for polytopes~\cite{Karu:g}.

\item $\Delta_{V(F)}$ is a geometric complex embedded
 in $F$. \\
This is a result of McMullen, namely the argument in the proof of~\cite[Thm.4.1]{TSPMcMullen}; see also~\cite[Prop.3.4]{BD} and ~\cite[Lem.4.2]{Murai-Nevo:GLBT}.

\item Finally, conclude that $\Delta_{V(F)}$ triangulates $F$.\\
For this we need that the geometric realization of $\Delta_{V (F)}$ contains the boundary
of $F$, which we know by the induction hypothesis (for $i=2k$ this is the data
that $P$ is $(2k-1)$-simplicial). But every Cohen-Macaulay subcomplex of $\R^i$ and of dimension $i$ is a ball.

For $i=d$ we obtain that $\Delta$ is a geometric triangulation of $P$, as desired. \qedhere
\end{compactenum}
\end{proof}

\section{Quantitative generalized lower bound theorem and applications}\label{sec:QGLBT}

We shall predominantly need a notion of topology of induced subcomplexes. Let $X$ be a strongly regular CW complex, namely the open cells in $X$ are embedded and the intersection of closures of any two cells is the closure of a cell in the boundary of both.
Let $W$ be a subset of the vertexset of $X$. We denote by $X_W$ the collection of those faces whose vertices are subsets of $W$. Then $\alpha_{k-1}(X_W)$ denotes the dimension of the image of $H_{k-1}(X_W)$ in $H_{k-1}(\mathrm{cl}(X\setminus X_{V(X)\setminus W}))$.

\subsection{Proof of Theorem~\ref{thm:QGLBT}}
With the structure of intersection cohomology given in Section~\ref{sec:IH}, we conclude the proof of Theorem \ref{thm:QGLBT}. While it is possible to prove the theorem in the same way as in \cite{Adiprasito:toric}, this is a little cumbersome as the "support" of a Chow cohomology class is a little tricky to phrase in the intersection ring. Instead, we give a proof that
focuses on an argument similar to \cite{Kalai:Rig} and the appendix of \cite{Adiprasito-Nevo-Samper:smooth}, where we proved the same under the assumption that lower-dimensional cohomologies vanish.

\newcommand{\bigslant}[2]{{\raisebox{.3em}{$#1$} \Big/ \raisebox{-.3em}{$#2$}}}

\newcommand*\longhookrightarrow{\ensuremath{
\lhook\joinrel\relbar\joinrel\rightarrow}}
\newcommand*\longhookleftarrow{\ensuremath{
\leftarrow\joinrel\relbar\joinrel\rhook}}

Let $Q$ denote a simplicial polytope and let $W$ denote a subset of vertices of $Q$, which we may identify with a set of prime divisors. Assume that the closed neighborhood $N$ of the induced subcomplex $Q_W$, namely the subcomplex consisting of all faces of $Q$ that are contained in a face containing a vertex from $W$, is a regular neighborhood of $Q_W$.

Define \[\mathcal{I}\ :=\ 
\ker[  \AR(N)\rightarrow  \bigoplus_{w\in W} \AR(\st_w N)].\]
Set $\BR(N):=\AR(N)/\mathcal{I}$.
Then $\AR(N)\twoheadrightarrow \BR(N)$.

Then, by definition, we have an injection
\[\BR(N)\ \longhookrightarrow\ \bigoplus_{w\in W} \AR(\st_w N)\]
where $\AR(\st_w N)$ denotes the quotient ring of $\AR(N)$ corresponding to the closed star of $w$ in $N$.

Consider $\ell$ the class of an ample divisor in $\AR(N)$.
Then we have a diagram
\[\begin{tikzcd}
0 \arrow[]{r} & \BR^{k-1}(N) \arrow[]{r}\arrow[]{d}{\ell} & \bigoplus_{w\in W} \AR^{k-1}(\st_w N) \arrow[]{d}{\ell} \\
 0 \arrow[]{r} & \BR^{k}(N) \arrow[]{r}& \bigoplus_{w\in W} \AR^{k}(\st_w N)
 \end{tikzcd}
\]
where the second vertical map $\ell$, and therefore also the first, is an injection.

 Finally, we have an isomorphism
\begin{equation}\label{eq:hom}
H^{k-1}(N)^{\binom{d}{k}}\ \cong\ \ker [{\AR}^{k}(N) \longrightarrow \bigoplus_{w\in W} \AR^{k}(\st_w N)],
\end{equation}
 see \cite[Thm.2.2]{Novik-Swartz:socle}. Hence, we obtain an injection of $(H^{k-1}(N))^{\binom{d}{k}}$ into \[\bigslant{{\AR}^{k}(N)}{\ell{{\AR}^{k-1}(N)}}.\]
 
 \begin{rem}
More elementary, one can prove this fact as follows.
Consider the chain complex $\widetilde{\mathcal{P}}^\bullet$ defined as
		\[0\ \longrightarrow\ \R^\ast[\Delta]\ \longrightarrow\ \bigoplus_{v\in \varDelta^{(0)}} \R^\ast[\st_v \Delta]\ \longrightarrow\ \cdots\ \longrightarrow\  \bigoplus_{F \in \varDelta^{(d-1)}} 
		\R^\ast[\st_F \Delta] \  \longrightarrow\ 0,\]
and tensor it with the Koszul complex ${K}^\bullet$ given by the linear system of parameters. Computing the second page of the associated filtrations, we obtain Isomorphism~\eqref{eq:hom}. If $d>2k$, this can be strengthened to
\[H^{k-1}(N)^{\binom{d+1}{k}}\ \cong\ \ker\left[\bigslant{{\AR}^{k}(N)}{\ell{{\AR}^{k-1}(N)}}
\rightarrow \bigoplus_{w\in W} \bigslant{{\AR}^{k}(\st_w N)}{\ell{{\AR}^{k-1}(\st_w N)}}
\right].
\]
If $d=2k$, then we only obtain an injection
\[H^{k-1}(N)^{\binom{d+1}{k}}\ \longhookrightarrow\ \bigslant{{\AR}^{k}(N)}{\ell{{\AR}^{k-1}(N)}}.
\]
These maps are realized explicitly by the Ishida complex~\cite[Secs.4,5]{Ishida} and \cite{Oda}.
 \end{rem}

In particular,
\begin{lemma}
Under the above conditions, $H^{k-1}(N)^{\binom{d+1}{k}}$ injects into $\bigslant{{\AR}^{k}(N)}{\ell{{\AR}^{k-1}(N)}}$.
\end{lemma}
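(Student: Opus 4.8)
The plan is to assemble the lemma from the diagram and the isomorphism \eqref{eq:hom} already displayed in the preceding remark, so the main work is purely diagram-chasing. Concretely, I would argue as follows. By \eqref{eq:hom} we have an identification
\[
H^{k-1}(N)^{\binom{d}{k}}\ \cong\ \ker\bigl[\AR^{k}(N)\longrightarrow \textstyle\bigoplus_{w\in W}\AR^{k}(\st_w N)\bigr].
\]
Composing this inclusion with the quotient map $\AR^k(N)\twoheadrightarrow \AR^k(N)/\ell\AR^{k-1}(N)$ does \emph{not} immediately give an injection, since a class supported off the stars could lie in $\ell\AR^{k-1}(N)$; this is exactly the point where the commutative square with vertical maps $\ell$ enters. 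So the first step is to record that $\ker[\AR^k(N)\to\bigoplus_w\AR^k(\st_w N)]$ is, by the definition of $\mathcal I$ and $\BR$, precisely the degree-$k$ part $\BR^k(N)$ of the quotient ring (after passing through $\AR^k(N)\twoheadrightarrow \BR^k(N)$, whose kernel is $\mathcal I^k$, and using that $\mathcal I^k = \ell\cdot(\text{something})$ is \emph{not} what we want — rather $\BR^k(N)\hookrightarrow\bigoplus_w\AR^k(\st_w N)$ is the injection displayed after the square). Thus the identification becomes $H^{k-1}(N)^{\binom{d}{k}}\cong \BR^k(N)$.

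Second step: use the commutative square
\[
\begin{tikzcd}
0 \arrow[r] & \BR^{k-1}(N) \arrow[r]\arrow[d,"\ell"] & \bigoplus_{w\in W}\AR^{k-1}(\st_w N)\arrow[d,"\ell"]\\
0 \arrow[r] & \BR^{k}(N) \arrow[r] & \bigoplus_{w\in W}\AR^{k}(\st_w N)
\end{tikzcd}
\]
together with the injectivity of the right vertical $\ell$ (which holds because each $\AR(\st_w N)$ satisfies hard Lefschetz in the relevant degrees, by Karu's theorem, so multiplication by an ample class $\AR^{k-1}\to\AR^{k}$ is injective for $k-1<k\le d/2$), to conclude that the left vertical map $\ell\colon\BR^{k-1}(N)\to\BR^k(N)$ is injective. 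Consequently
\[
\frac{\BR^k(N)}{\ell\BR^{k-1}(N)}\ \cong\ \frac{\BR^k(N)}{\im\ell}
\]
has dimension $\dim\BR^k(N)-\dim\BR^{k-1}(N)$. But actually what we need is slightly different: we want an injection of $\BR^k(N)\cong H^{k-1}(N)^{\binom{d}{k}}$ — or rather the improved $\binom{d+1}{k}$ version — into $\AR^k(N)/\ell\AR^{k-1}(N)$, and here the key identity is that the composite $\BR^k(N)\hookrightarrow\AR^k(N)\to \AR^k(N)/\ell\AR^{k-1}(N)$ has kernel $\BR^k(N)\cap \ell\AR^{k-1}(N)$, which I claim is trivial. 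To see the claim, note a class in $\BR^k(N)$ maps to $0$ in every $\AR^k(\st_w N)$; if it equals $\ell\cdot x$ for $x\in\AR^{k-1}(N)$, then $\ell\cdot(\text{image of }x\text{ in }\AR^{k-1}(\st_w N))=0$ in $\AR^k(\st_w N)$ for all $w$, hence by injectivity of the right-hand $\ell$ the image of $x$ in each $\AR^{k-1}(\st_w N)$ vanishes, i.e. $x\in\BR^{k-1}(N)$ (using the exactness of the top row, i.e. that $\BR^{k-1}(N)$ is exactly the kernel of $\AR^{k-1}(N)\to\bigoplus_w\AR^{k-1}(\st_w N)$ — here I should double-check the $k-1$ row is exact, which requires $k-1\le d/2$, true). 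Therefore our class lies in $\ell\BR^{k-1}(N)$; but under the identification with cohomology this would contradict... — no, more cleanly: replace the target $\AR^k(N)/\ell\AR^{k-1}(N)$ by noting the composite factors as $\BR^k(N)\to \BR^k(N)/\ell\BR^{k-1}(N)\hookrightarrow \AR^k(N)/\ell\AR^{k-1}(N)$, and the $\binom{d+1}{k}$ (resp.\ $\binom{d}{k}$) identification is with $\BR^k(N)/\ell\BR^{k-1}(N)$ when $d>2k$ and with $\BR^k(N)$ itself when $d=2k$ — precisely the dichotomy spelled out in the Remark. Either way one gets the desired injection.

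The main obstacle I anticipate is bookkeeping the distinction between the $\binom{d}{k}$ statement coming directly from \eqref{eq:hom} (Novik–Swartz) and the sharper $\binom{d+1}{k}$ statement obtained from the Koszul/Ishida computation in the Remark, and making sure the Lefschetz injectivity of the vertical maps $\ell$ is invoked only in degrees $\le d/2$ where Karu's hard Lefschetz for the stars $\st_w N$ (which are themselves complete fans / polytopes after suitable quotienting) genuinely applies. There is also the mild subtlety that $N$ is a manifold-with-boundary neighborhood rather than a polytope, so one must check that the ample class $\ell$ restricts to an ample class on each $\st_w N$ and that the relevant $\AR$'s still enjoy hard Lefschetz — but this is exactly the content built into the construction of $\IH$ in Section~\ref{sec:IH} together with Proposition~\ref{prop:{IH}&A}, so I would simply cite those. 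Modulo these checks the lemma is a three-line consequence of the displayed diagram and isomorphism.

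\begin{proof}
By Isomorphism~\eqref{eq:hom} (see also the Remark above) we may identify $H^{k-1}(N)^{\binom{d+1}{k}}$ — when $d=2k$, at least as a subspace — with $\BR^k(N)$, respectively with $\BR^k(N)/\ell\BR^{k-1}(N)$ when $d>2k$. It therefore suffices to produce an injection of this space into $\AR^k(N)/\ell\AR^{k-1}(N)$. Consider the composite
\[
\BR^k(N)\ \longhookrightarrow\ \AR^k(N)\ \longrightarrow\ \AR^k(N)/\ell\AR^{k-1}(N),
\]
the first map being the canonical section-free inclusion $\ker[\AR^k(N)\to\bigoplus_{w}\AR^k(\st_w N)]\hookrightarrow\AR^k(N)$. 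Its kernel is $\BR^k(N)\cap\ell\AR^{k-1}(N)$. Let $y=\ell x$ lie in this intersection, $x\in\AR^{k-1}(N)$. For each $w\in W$ the image of $y$ in $\AR^k(\st_w N)$ is zero, i.e.\ $\ell\cdot(x|_{\st_w N})=0$; since multiplication by the ample class $\ell\colon\AR^{k-1}(\st_w N)\to\AR^k(\st_w N)$ is injective (Karu's hard Lefschetz for polytopes~\cite{Karu:g}, applicable as $k\le d/2$, via Proposition~\ref{prop:{IH}&A}), we get $x|_{\st_w N}=0$ for all $w$, hence $x\in\BR^{k-1}(N)$ by the exactness of the degree-$(k-1)$ row. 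Thus $y\in\ell\BR^{k-1}(N)$, so the composite factors through an injection
\[
\BR^k(N)/\ell\BR^{k-1}(N)\ \longhookrightarrow\ \AR^k(N)/\ell\AR^{k-1}(N),
\]
which is exactly what was needed when $d>2k$; when $d=2k$ one uses instead that the left vertical $\ell$ in the displayed square, hence $\ell\colon\BR^{k-1}(N)\to\BR^k(N)$, is injective, so $\BR^k(N)\hookrightarrow\AR^k(N)/\ell\AR^{k-1}(N)$ already — and the $\binom{d+1}{k}$ copy of $H^{k-1}(N)$ injects into $\BR^k(N)$ by the last display of the Remark. In all cases $H^{k-1}(N)^{\binom{d+1}{k}}$ injects into $\AR^k(N)/\ell\AR^{k-1}(N)$.
\end{proof}
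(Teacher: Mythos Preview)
There is a systematic notational error that makes several of your statements false as written: you have swapped $\mathcal I$ and $\BR$. In the paper, $\mathcal I=\ker[\AR(N)\to\bigoplus_w\AR(\st_w N)]$ is the \emph{kernel} and $\BR(N)=\AR(N)/\mathcal I$ is the \emph{quotient} (equivalently, the image in $\bigoplus_w\AR(\st_w N)$). Thus there is no inclusion ``$\BR^k(N)\hookrightarrow\AR^k(N)$'', and the assertion ``$x\in\BR^{k-1}(N)$'' for an element $x\in\AR^{k-1}(N)$ with vanishing restrictions is not meaningful; what you mean throughout is $\mathcal I^k$ and $\mathcal I^{k-1}$. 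With that correction your diagram chase is valid and yields the injection $\mathcal I^k/\ell\,\mathcal I^{k-1}\hookrightarrow\AR^k(N)/\ell\,\AR^{k-1}(N)$ (this is just the snake lemma for $0\to\mathcal I\to\AR(N)\to\BR(N)\to 0$ together with the injectivity of $\ell$ on $\BR$), and recovers precisely the ``Hence'' step the paper records just before the Remark --- but only for the exponent $\binom{d}{k}$, since \eqref{eq:hom} gives $\mathcal I^k\cong H^{k-1}(N)^{\binom{d}{k}}$.

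The passage to $\binom{d+1}{k}$ is where your argument becomes circular. You claim that for $d>2k$ one identifies $H^{k-1}(N)^{\binom{d+1}{k}}$ with $\mathcal I^k/\ell\,\mathcal I^{k-1}$ (your ``$\BR^k/\ell\BR^{k-1}$''), citing the Remark. But the Remark does \emph{not} say this: it identifies $H^{k-1}(N)^{\binom{d+1}{k}}$ directly with $\ker\bigl[\AR^k(N)/\ell\,\AR^{k-1}(N)\to\bigoplus_w\AR^k(\st_w N)/\ell\,\AR^{k-1}(\st_w N)\bigr]$, which is already a subspace of $\AR^k(N)/\ell\,\AR^{k-1}(N)$ --- i.e.\ the lemma itself. (Your $\mathcal I^k/\ell\,\mathcal I^{k-1}$ sits inside this kernel, but equality would require that $\BR^k(N)/\ell\,\BR^{k-1}(N)$ inject into $\bigoplus_w\AR^k(\st_w N)/\ell\,\AR^{k-1}(\st_w N)$, which you have not argued.) Likewise for $d=2k$, the ``last display of the Remark'' you invoke is literally the injection $H^{k-1}(N)^{\binom{d+1}{k}}\hookrightarrow\AR^k(N)/\ell\,\AR^{k-1}(N)$, not an injection into $\mathcal I^k$. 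Accordingly the paper gives no separate proof of the lemma: it is introduced by ``In particular,'' as an immediate restatement of the two displayed formulas in the Remark, obtained from the double complex $\widetilde{\mathcal P}^\bullet\otimes K^\bullet$ and the Ishida complex. Your diagram chase, once the $\BR/\mathcal I$ swap is repaired, is correct but redundant for the statement actually being claimed.
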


We now go back to $P$. Subdivide it barycentrically twice to obtain $P'$. Then the corresponding subdivision $P'_{W'}$ of $P_W$ is an induced subcomplex satisfying the regular neighborhood condition for the previous lemma. Let $N$ denote its closed neighborhood. Therefore, $H^{k-1}(N)^{\binom{d+1}{k}}$ injects into $\bigslant{{\AR}^{k}(N)}{\ell{\overline{\AR}^{k-1}(N)}}$.

Now, using the decomposition theorem, the cokernel of the pullback inclusion of $\IH(P)$ to $\AR(P')$ is generated by the images of the Gysin maps (see \cite[Section~6.5]{FultonIT}).
Thus, these images
correspond to cohomologically trivial cycles in $H^{k-1}(\mathrm{cl}(\partial P\setminus P_{V\setminus W}))$. Hence, we conclude an injection of the image of $H_{k-1}(N)^{\binom{d+1}{k}}\cong H_{k-1}(P_W)^{\binom{d+1}{k}}$ in $H_{k-1}(\mathrm{cl}(\partial P\setminus P_{V\setminus W}))^{\binom{d+1}{k}}$ into
\[\bigslant{\IH^{k}(P)}{\ell {\IH^{k-1}(P)}. \ \ \ \qed}\]

%

\subsection{Applications: proofs of Theorem~\ref{thm:induced} and Corollary~\ref{cor:smooth}.}
\begin{proof}[Proof of Theorem~\ref{thm:induced}]
That the image of $H_{k-1}(P_W)$ in $H_{k-1}(\mathrm{cl}(\partial P \setminus P_{V\setminus W}))$ is zero is immediate from the QGLBT, Theorem~\ref{thm:QGLBT}. 

Finally, it remains to argue that a missing simplex $\partial \sigma$ of dimension $k$ in $P$ is indeed contained in a face of $P$. To this end, let $v$ denote one of its vertices, and observe that following Kalai \cite{Kalai:Rig}, the face $\sigma-v$ must be contained in the star of $v$ in $\partial P$; indeed, if that is not the case, \[\bigslant{\IH^{k}(\st_v P \cup \sigma-v)}{\ell {\IH^{k-1}(\st_v P \cup \sigma-v)}}\] is nontrivial.
\end{proof}

\begin{proof}[Proof of Corollary~\ref{cor:smooth}]
When the convex body $K$ is smooth, let $b$ be bigger then any given constant, and when $K$ has a $C^2$ boundary, let $b=\Omega(\epsilon^{-\frac{d-1}{2}})$ when $P$ is $\epsilon$-close to $K$ for small enough $\epsilon >0$.
With Theorem~\ref{thm:QGLBT} (QGLBT) at hand, we proceed exactly as in \cite{Adiprasito-Nevo-Samper:smooth} and find
$b$ copies $\gamma_1,\ldots,\gamma_b$ of the $(k-1)$-sphere in $\partial K$, and $0<\epsilon'<\epsilon$ (where $P$ is $\epsilon$-close to $K$, for some $\epsilon$ small enough) such that:

(i) the $\epsilon$-neighborhoods $\gamma_i+\epsilon$, $1\le i\le b$, are pairwise disjoint in $\partial K$, and each $\gamma_i+\epsilon$ deformation retracts to $\gamma_i$;

(ii) if 
$v,u\in V(P)$ such that $v\in \gamma_i+\epsilon'$ and $u\in \gamma_j+\epsilon'$ for some $i\neq j$, then no proper face of $P$ contains both $v$ and $u$;

(iii) for $W_i:=V(P)\cap \gamma_i+\epsilon'$, the following inclusions hold
\[\pi_P\gamma_i \subset P_{W_i} \subseteq \partial P\cap {\conv} W_i \subseteq \gamma_i+\epsilon\]
where $\pi_P$ denotes the closest point projection to $P$.

We conclude the proof by noticing that (i) and (iii) imply $\alpha_{k-1}(P_{W_i})\ge 1$, and thus, by (ii), for $W=\uplus_i W_i$, $\alpha_{k-1}(P_W)=\sum_{i=1}^{b}\alpha_{k-1}(P_{W_i})\ge b$.
\end{proof}

\bibliographystyle{myamsalpha}
\bibliography{bib_g_k}

\end{document}